\newtheorem{theorem}{Theorem}[section]
\newtheorem{definition}[theorem]{Definition}
\newtheorem{lemma}[theorem]{Lemma}
\theoremstyle{plain}
\newtheorem{remark}[theorem]{Remark}
\def \R {\mathbb R}
\def \B {\mathbb B}
\def \sp {\mathbb S}
\def \vol {\operatorname{vol}}
\def \M {M_{+,p}}
\def \Me {M_{\varepsilon,p}}
\def \Md {\M^\circ}
\def \Med {\Me^\circ}
\def \vol {\operatorname{vol}}
\def \co {\operatorname{co}}
\def \supp {\operatorname{supp}}
\def \domlambda {(\frac n{n+p},1) \cup (1, +\infty]}
\def \s{s_{f,p}}
\def \c{c_{f,p}}
\def \gl {\operatorname{GL}}
\def \l {\lambda}
\def \constanta {a_{n,p,\l}}
\def \constantb {b_{n,p,\l}}
\def \constantd {d_{n,p,\l}}
\author{J. Haddad}
\author{C. H. Jim\'enez}
\author{J. Haddad, C. H. Jim\'enez, M. Montenegro}
\begin{document}
\title{Asymmetric Blaschke-Santal\'o functional inequalities}
\date{}
\maketitle

\abstract{ In this work we establish functional asymmetric versions of the celebrated Blaschke-Santal\'o inequality. As consequences of these inequalities we recover their geometric counterparts with equality cases, as well as, another inequality with strong probabilistic flavour that was firstly obtained by Lutwak, Yang and Zhang. We present a brief study on an $L_p$ functional analogue to the center of mass that is necessary for our arguments and that might be of independent interest.}

\section{Introduction}

Functional inequalities with geometric counterpart have attracted great interest in recent years. The  Brunn-Minkowski/Prekopa-Leindler inequality \cite{Prk} or Petty-Projection/Sobolev inequality \cite{Zh} are just two of the main examples in this direction. While the first connection above helped trigger a fruitful development of functional analogues  of several geometric parameters  into the class of $\log$-concave functions currently undergoing, the second confirmed the strong links between Sobolev type inequalities and isoperimetric inequalites, generating at the same time an increasing interest for finding stronger, affine invariant or asymmetric versions of both Sobolev type inequalities and isoperimetric inequalities.

Within the class of isoperimetric inequalities, which often compare two parameters associated to convex, star or other more general bodies, we will deal in this work with the subclass of affine isoperimetric inequalities. These inequalities remain invariant under non-degenerate linear transformations (or other subgroups) and its interest lies in the fact that they often are stronger and  imply their Euclidean counterparts.

More specifically, our work concerns the Blaschke-Santal\'o inequality for convex sets and some functional versions of it. The Blaschke-Santal\'o inequality is undeniably one of the most important affine invariant inequalities in Convex Geometry (see \cite{Sch} for an overview and references therein).
Let $K\subset\R^n$ be an origin-symmetric convex body (compact and convex with nonempty interior such that $K=-K$) and $K^\circ$ the polar body defined by
\[K^\circ = \{y \in \R^n: \langle y, z \rangle \leq 1\ \ \forall z \in K \}\, .\]
The Blaschke-Santal\'o inequality states that

\begin{equation}
	\label{ineq_BSsim}
	\vol(K) \vol(K^\circ) \leq \omega_n^2
\end{equation}
and equality holds if, and only if, $K$ is a centered ellipsoid, where $\omega_n$ is the volume of the Euclidean unit ball $\B^n_2 \subset \R^n$ for $n \geq 2$.

If $K$ is not origin-symmetric, then there is a unique point $s = s(K) \in K$ for which the volume of $(K-s)^\circ$ is minimized (see \cite{Sch} for references).
This point $s$ is called the Santal\'o point of $K$ and we have
\begin{equation}
	\label{ineq_BSasim}
	\vol(K) \vol((K-s)^\circ) \leq \omega_n^2
\end{equation}
and equality holds if, and only if, $K$ is an ellipsoid.

It is interesting to notice that the product volume in the above Blaschke-Santal\'o inequality remains bounded by $\omega_n^2$ when the Santal\'o point $s$ of $K$ is replaced by the center of mass $\bold{c} = \bold{c}(K)$ of $K$. In fact, we have

\begin{equation} \label{ineq BScentr}
\vol(K) \vol((K-\bold{c})^\circ) \leq \omega_n^2
\end{equation}
and equality holds if, and only if, $K$ is an ellipsoid. This claim can be proved from the following result which provides a characterization of Santal\'o points:

\begin{lemma}[see {\cite[10.22]{Sch}}]
	\label{lemma_center}
Let $K \subset \R^n$ be a convex body. Then $0$ is the Santal\'o point of $K$ if, and only if,

\[
\bold{c}(K^\circ) = \int_{K^\circ} z dz = 0\, .
\]
In other words, $0$ is the Santal\'o point of $K$ if, and only if, $K^\circ$ has its center of mass at the origin.
\end{lemma}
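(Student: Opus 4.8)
The plan is to realize the function $s\mapsto\vol((K-s)^\circ)$ in polar coordinates and to identify its gradient at the origin as a positive multiple of $\int_{K^\circ}z\,dz$; both implications then fall out of the convexity of that function. For $\theta\in\sp^{n-1}$ I write $h_K(\theta)=\max_{z\in K}\langle z,\theta\rangle$ for the support function and $\rho_L(\theta)=\max\{r\ge 0:\ r\theta\in L\}$ for the radial function of a star body $L$, so that $\rho_{K^\circ}=1/h_K$ whenever $0\in\operatorname{int}(K)$. First I would reduce to the case $0\in\operatorname{int}(K)$: the set $(K-s)^\circ$ is bounded precisely when $s\in\operatorname{int}(K)$, and if $s\notin\operatorname{int}(K)$ then $(K-s)^\circ$ contains a ray and hence has infinite volume, so the minimum of $s\mapsto\vol((K-s)^\circ)$ is attained inside $\operatorname{int}(K)$ and the Santal\'o point lies there; likewise $\int_{K^\circ}z\,dz$ converges only if $K^\circ$ is bounded, i.e.\ only if $0\in\operatorname{int}(K)$, so the hypothesis $\mathbf{c}(K^\circ)=0$ already forces this. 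Hence I may work on the open convex set $U=\operatorname{int}(K)$, where $h_{K-s}(\theta)=h_K(\theta)-\langle s,\theta\rangle>0$ for all $\theta$, and write, by integration in polar coordinates,
\[
\phi(s):=\vol\big((K-s)^\circ\big)=\frac1n\int_{\sp^{n-1}}\rho_{(K-s)^\circ}(\theta)^n\,d\theta=\frac1n\int_{\sp^{n-1}}\big(h_K(\theta)-\langle s,\theta\rangle\big)^{-n}\,d\theta.
\]

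Next I would differentiate. For each fixed $\theta$ the integrand is a convex function of $s$ on $U$ (a positive affine map composed with $t\mapsto t^{-n}$), and for every fixed $v\ne 0$ it is strictly convex along the line $\{s+tv:t\in\R\}$ for almost every $\theta$; hence $\phi$ is strictly convex on $U$, which in particular re-proves uniqueness of the Santal\'o point. On compact subsets of $U$ the factor $h_K(\theta)-\langle s,\theta\rangle$ is bounded below by a positive constant uniformly in $\theta$, so differentiation under the integral sign is legitimate and gives
\[
\nabla\phi(s)=\int_{\sp^{n-1}}\big(h_K(\theta)-\langle s,\theta\rangle\big)^{-n-1}\,\theta\,d\theta,\qquad\text{hence}\qquad\nabla\phi(0)=\int_{\sp^{n-1}}h_K(\theta)^{-n-1}\,\theta\,d\theta.
\]
On the other hand, integrating $\int_{K^\circ}z\,dz$ in polar coordinates and using $\rho_{K^\circ}=1/h_K$,
\[
\int_{K^\circ}z\,dz=\int_{\sp^{n-1}}\bigg(\int_0^{1/h_K(\theta)}r^n\,dr\bigg)\theta\,d\theta=\frac1{n+1}\int_{\sp^{n-1}}h_K(\theta)^{-n-1}\,\theta\,d\theta=\frac1{n+1}\,\nabla\phi(0).
\]

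Finally I would assemble the pieces: by definition $0$ is the Santal\'o point of $K$ if and only if $\phi$ attains its minimum over $U$ at $s=0$, and since $\phi$ is convex and differentiable on the open set $U$, this is equivalent to $\nabla\phi(0)=0$; by the identity above this is in turn equivalent to $\int_{K^\circ}z\,dz=0$, that is, to $\mathbf{c}(K^\circ)=0$ (as $\vol(K^\circ)>0$). The main obstacle is not a single deep step but the careful handling of boundary behaviour — the reduction to $0\in\operatorname{int}(K)$, and the fact that the relevant volumes blow up otherwise — together with the justification of differentiating under the integral sign; once those are in place, the polar-coordinate computation above does the rest.
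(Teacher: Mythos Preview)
The paper does not supply its own proof of this lemma; it is quoted verbatim from Schneider's book and used as a black box. Your argument is correct and is essentially the classical one: express $\phi(s)=\vol((K-s)^\circ)$ via the polar formula $\phi(s)=\tfrac1n\int_{\sp^{n-1}}(h_K(\theta)-\langle s,\theta\rangle)^{-n}\,d\theta$, differentiate under the integral to get $\nabla\phi(0)=(n+1)\int_{K^\circ}z\,dz$, and use (strict) convexity of $\phi$ on $\operatorname{int}(K)$ to identify the critical point with the minimizer. The boundary reduction and the dominated-convergence justification for differentiating are handled properly, so nothing is missing.
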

Applying Inequality \eqref{ineq_BSasim} to the set $(K - \bold{c}(K))^\circ$ instead of $K$ and using the previous lemma we obtain that both $s(K)$ and $\bold{c}(K)$ belong to the set

\begin{equation}\label{Santaloreg}
\left\{x\in K\, :\, \frac{\vol(K)\vol((K-x)^\circ)}{\omega_n^2}\leq 1\right\},
\end{equation}
and these two points coincide when $K$ is an ellipsoid. This latter remark will shed some light on the appearance of an $Lp$-funcional analogue of the center of mass that we study in section 3. Let us note that the set in (\ref{Santaloreg}) was considered in \cite{M-W} where the authors studied its size and other properties.
%the Santal\'o point $s$ in \eqref{ineq_BSasim} can be replaced by %$\bold{c}(K)$.

Blaschke-Santal\'o inequalities have great influence in different areas. In particular, the Blaschke-Santal\'o inequality \eqref{ineq_BSasim} is equivalent to the well-known affine isoperimetric inequality (see formula (10.19) and the discussion on page 548 of \cite{Sch}). Inequalities \eqref{ineq_BSsim} and \eqref{ineq_BSasim} were established by Blaschke \cite{Blas} for $n=2$ and $n=3$ and later by Santal\'o \cite{Santa} for arbitrary dimensions and, finally, the cases of equality were proved by Petty \cite{Pet}. We also refer to Hug \cite{Hug}, Meyer and Pajor \cite{MP} and Meyer and Reisner \cite{MR} for simpler proofs. More recently, Bianchi and Kelly \cite{BK} provided a proof of \eqref{ineq_BSsim} by using a Fourier analytic approach. The Orlicz case for star bodies was treated by Lutwak, Yang and Zhang \cite{LYZorl} and Zhu \cite{Zh1}. Functional versions were established by Fradelizi and Meyer \cite{FM} in relation to the Pr\'ekopa-Leindler inequality, by Artstein-Avidan, Klartag and Milman \cite{A-K-M} and later by Lin and Leng \cite{LiLe} for log-concave functions.

Lutwak and Zhang \cite{LZ} presented a different proof of the Blaschke-Santal\'o inequality \eqref{ineq_BSsim} as a limit case as $p \to + \infty$ of a family of $L_p$ affine inequalities for centroid bodies. Let $K$ be a star body (compact and star-shaped with nonempty interior) with respect to the origin, $p\geq 1$ and $\Gamma_p K$ be the convex body defined by means of the support function

\[
h(\Gamma_p K, y) =\left( \frac 1 {c_{n,p} \vol(K)} \int_K |\langle y, z \rangle|^p dz \right)^{\frac{1}{p}},
\]
where the normalization constant $c_{n,p}$ is chosen so that $\Gamma_p \B^n_2 = \B^n_2$. Then,
\begin{equation}
	\label{ineq_BSlp}
	\vol(K) \vol(\Gamma_p^\circ K) \leq \omega_n^2\, .
\end{equation}
Inequality \eqref{ineq_BSlp} is an extension of \eqref{ineq_BSsim} in the sense that $h(\Gamma_p K, \cdot)$ converges to the support function of the convex hull of $K\cup -K$ when $p \to + \infty$, so that the Blaschke-Santal\'o inequality can be recovered when $K$ is convex and symmetric. Moreover, they also gave an equivalent functional version of this inequality.
Let $\sp^{n-1}$ denote the Euclidean unit sphere for $n \geq 2$. Then given two positive continuous functions $f,g:\sp^{n-1} \to \R$, we have
\begin{equation}
	\label{ineq_BSlp_func_fg}
	c_{n-2,p} \|f\|_{\frac n{n+p}} \|g\|_{\frac n{n+p}} \leq \int_{\sp^{n-1}} \int_{\sp^{n-1}} |\langle x, y \rangle|^p f(x) g(y) dx dy\, .
\end{equation}

In \cite{LYZ}, Lutwak, Yang and Zhang proved the $L_p$ Busemann-Petty centroid inequality
\begin{equation}
	\label{ineq_BP}
	\vol(\Gamma_p K) \geq \vol(K)\, ,
\end{equation}
which will play a central role in our paper.
It is worth noticing that \eqref{ineq_BP} together with \eqref{ineq_BSsim} implies \eqref{ineq_BSlp}.

More recently, Haberl and Schuster \cite{HS} proved some inequalities extending \eqref{ineq_BSlp} and \eqref{ineq_BP} to the asymmetric case.
Defining the asymmetric $L_p$ moment body $\Me K$ by the support function
\[
h(\Me K, y) =\left( \int_K \langle y, z \rangle_\varepsilon^p dz \right)^{\frac{1}{p}},
\]
where $\varepsilon \in [0,1]$ and $\langle y, z \rangle_\varepsilon^p = (1-\varepsilon) \max\{ \langle y, z \rangle, 0\}^p + \varepsilon \max\{ - \langle y, z \rangle, 0\}^p$, they extended inequalities \eqref{ineq_BSlp} and \eqref{ineq_BP} for the operator $\Me$, namely
\begin{equation}
	 \label{ineq_BSlp_asim}
	 \vol(K)^{\frac{n}{p}+1} \vol\left(((\Me K)-s)^\circ\right) \leq R_{n,p}
\end{equation}
and

\begin{equation}
	\label{ineq_BP_asim}
	 \vol(K)^{-\frac{n}{p}-1}\vol(\Me K) \geq r_{n,p}\, ,
\end{equation}
where $s \in \R^n$ is the Santal\'o point of the set $\Me K$, which is not necessarily origin-symmetric.
Here the constants $R_{n,p}$ and $r_{n,p}$ are defined as the left-hand side of the respective inequality for $K = \B^n_2$, precisely
\[
R_{n,p} = {\pi^{\frac{n}{2 p}+n} {\Gamma \left(\frac{n+2}{2}\right)^{-2}} \left(\frac{2 \Gamma \left(\frac{1}{2} (n+p+2)\right)}{\Gamma \left(\frac{n+2}{2}\right) \Gamma \left(\frac{p+1}{2}\right)}\right)^{\frac{n}{p}}}
\]
and

\[
r_{n,p} = {\pi^{-\frac{n}{2 p}} \left(\frac{2 \Gamma \left(\frac{1}{2} (n+p+2)\right)}{\Gamma \left(\frac{n+2}{2}\right) \Gamma \left(\frac{p+1}{2}\right)}\right)^{-\frac{n}{p}}}.
\]

Inequalities \eqref{ineq_BSsim}, \eqref{ineq_BSasim}, \eqref{ineq_BSlp}, \eqref{ineq_BP}, \eqref{ineq_BSlp_asim} and \eqref{ineq_BP_asim} are sharp and equality holds in all of them if, and only if, $K$ is an origin-symmetric ellipsoid.

The bodies $\Me K$ were introduced by Ludwig \cite{Lud}, and afterwards several affine functional inequalities related to the moment and centroid bodies have been extended to the asymmetric case, see for example \cite{Wa} and \cite{Ng}.

Among the several strong (and perhaps surprising) connections between Probability and Convex Geometry (see \cite{Madi} and references therein), in \cite{RENYI} Lutwak, Yang and Zhang obtained an inequality that provides a sharp lower bound for the moments of the inner product of random variables in terms of their $\l$-R\'enyi entropy. Their inequality, that relies on the theory of dual mixed volumes, can be regarded as a generalization of \eqref{ineq_BSlp_func_fg} for functions supported in $\R^n$. Namely, let $\lambda \in \domlambda$ and $\l' = \frac{\l}{\l - 1}$ be the dual of $\l$. Then, for any nonzero nonnegative continuous functions $f,g:\R^n \to \R$ with compact support,

\begin{equation}
	\label{ineq_Renyi}
	\|f\|_{1}^{n+\lambda'p} \|g\|_{1}^{n+\lambda'p} \leq  d_{n,p,\l} \|f\|_\l^{\l' p} \|g\|_\l^{\l' p} \left( \int_{\R^n} \int_{\R^n} f(x) g(y) |\langle x, y \rangle|^p dx dy \right)^{n}.\\
\end{equation}

We are interested in the following functional version of the $L_p$ Blaschke-Santal\'o inequality \eqref{ineq_BSlp}. Let $\l \in \domlambda$. Then, for any nonzero nonnegative continuous function $f:\R^n \to \R$ with compact support,

\begin{equation}
	\label{ineq_BSlp_func_sim}
	\|f\|_{1}^{n+\lambda' p} \leq b_{n,p,\l} \|f\|_{\lambda}^{\lambda'p} \left( \int_{\sp^{n-1}} \left(\int_{\R^n} f(x) |\langle x, \xi \rangle|^p dy \right)^{-\frac{n}{p}} d\xi \right)^{-p}.
\end{equation}
Noticing that the formula for $h(M_pK, y)$ is a convex function regardless of the set $K$, we might interpret the double integral of \eqref{ineq_BSlp_func_sim} as the volume of the polar $L_p$ centroid body of $f$ (see Definition \ref{def_centroidf} below).
Plugging $f = \chi_K$ in \eqref{ineq_BSlp_func_sim} for $\lambda = + \infty$, one obtains \eqref{ineq_BSlp} for a general measurable set $K$.

Inequality \eqref{ineq_BSlp_func_sim} can be easily deduced in several ways: by a radial symmetrization argument (Steiner symmetrization with respect to the radial direction in volume-preserving polar coordinates), Inequality \eqref{ineq_BSlp} is immediately extended to any bounded measurable set $K \subset \R^n$.
Then, applying this extension to the level sets of $f$, one derives the result. However, one cannot apply this argument in the asymmetric case because the Santal\'o point varies with the level set. A shorter proof follows from \eqref{ineq_Renyi} by taking a suitable $g$ depending on $f$.
The fact that there is a connection between \eqref{ineq_Renyi} and \eqref{ineq_BSlp_func_sim} becomes clearer taking into account that the proof of Inequality \eqref{ineq_Renyi} in \cite{RENYI} uses a radial body $S_p f$ that for $f=\chi_K$ is a radial symmetrization of $K$.

In recent works \cite{HJM2, HJM1, HJM3}, the authors showed that the $L_p$ Busemann-Petty centroid inequality \eqref{ineq_BP} may be used to transform sharp Sobolev type inequalities with general norms into affine invariant inequalities. This is the case, for example, of the $L^p$ Sobolev inequality with a general norm that was proved by Cordero-Erausquin, Nazaret and Villani in \cite{CNV} using a mass-transportation approach. Let $p \in [1,n)$ and $\|.\|$ be a norm in $\R^n$. Then, for any smooth function $f:\R^n \to \R$ with compact support,
\begin{equation}
	\label{ineq_sobolevGNV}
	\|f\|_{\frac{np}{n-p}} \leq S_{n,p,\|.\|} \left( \int_{\R^n} \|\nabla f(x)\|_*^p dx \right)^{\frac{1}{p}}.
\end{equation}
Here $\|.\|_*$ is the dual norm of $\|.\|$ (see Section 2) and $S_{n,p,\|.\|}$ denotes the best Sobolev constant for \eqref{ineq_sobolevGNV}. Following \cite{HJM1}, if we choose a specific norm $\|.\|_f$ for each $f$ and apply the $L_p$ Busemann-Petty centroid inequality, we end up with the important sharp affine Sobolev inequality of Lutwak, Yang and Zhang \cite{LYZp}
\begin{equation}
	\label{ineq_sobolevLYZ}
	\|f\|_{\frac{np}{n-p}} \leq S_{n,p} \left( \int_{\sp^{n-1}} \left( \int_{\R^n} |\nabla_\xi f(x)|^p dx \right)^{-\frac{n}{p}}\right)^{-\frac{1}{n}},
\end{equation}
where $S_{n,p}$ and $\nabla_\xi f(x)$ denote, respectively, the best Sobolev constant for the Euclidean norm and the directional derivative of $f$ at the point $x \in \R^n$ in the direction of $\xi \in \sp^{n-1}$.

In the present work, we analyze a similar technique regarding a moment inequality with a general norm proved in \cite{RENYI}, and show that it may be transformed into a functional version of the $L_p$ Blaschke-Santal\'o inequality. We begin by recalling the following result due to Lutwak, Yang and Zhang:

\begin{lemma}[{\cite[Lemma 4.1]{RENYI}}]
Let $\l \in \domlambda$ and $K$ be a convex body with the origin in its interior. Denote by $g$ the gauge function of $K$ defined by $g(K,x) = \inf\{\lambda > 0:\, \lambda x \in K\}$. Then, for any nonzero nonnegative continuous function $f:\R^n \to \R$ with compact support,

\begin{equation}
	\label{ineq_moment}
	\|f\|_1^{n+p \l '} \leq \constanta \left( \int_{\R^n} f(x) g(K,x)^p dx\right)^{n} \|f\|_\l^{p \l '} \vol(K)^{p}.
\end{equation}
The best constant $\constanta$ is given by

\[
	\constanta = \left\{
	\begin{array}{cc}
		\left(\frac np \left(\frac{n}{\l' p}+1\right)^{\l'-1} \left(\frac{\lambda  p}{(1-\lambda ) n}-1\right)^{\frac{n}{p}} B\left(\frac{n}{p},-\l'-\frac{n}{p}+1\right)\right)^{p} & \l < 1\\
		\left(\frac np \left(\frac{n}{\l' p}+1\right)^{\l'-1} \left(\frac{\l' p}{n}+1\right)^{\frac{n}{p}} B\left(\frac{n}{p},\l'\right)\right)^{p} & \l > 1\\
	\left(\frac{n}{n+p}\right)^{-n} & \l = + \infty\\
	\end{array}
	\right.
	,
\]
where $B(\cdot, \cdot)$ denotes the binomial function

\[
B(m,l) = \frac{\Gamma(m)\Gamma(l)}{\Gamma(m+l)}
\]
for $m,l > 0$, being $\Gamma(\cdot)$ the usual Gamma function. Moreover, Inequality \eqref{ineq_moment} is sharp and equality holds if, and only if, $f(x) = a p_\l(b g(K,x))$ for constants $a, b \in \R$, where
	\begin{align*}
		p_\l(s) = \left\{
			\begin{array}{cc}
				(1+|s|^p)^{1/(\l-1)}, & \l < 1\\
				(1-|s|^p)_+^{1/(\l-1)}, & \l > 1\\
				\chi_{[-1,1]}(s), & \l = + \infty\\
			\end{array}
			\right.\, .
	\end{align*}
Here $t_+ = \max\{t,0\}$ and $\chi_{[-1,1]}$ denotes the characteristic function of the set $[-1,1]$.
\end{lemma}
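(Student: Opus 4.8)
\emph{Proof proposal.} My plan is to symmetrize $f$ with respect to the gauge of $K$, thereby reducing \eqref{ineq_moment} to a one‑dimensional inequality for nonincreasing profiles, and then to prove that one‑dimensional inequality by a direct comparison with the conjectured extremizer $p_\l$.

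I would first use the layer‑cake formula $f=\int_0^\infty\chi_{\{f>t\}}\,dt$, so that each of $\|f\|_1$, $\|f\|_\l^\l$ and $\int_{\R^n}f\,g(K,\cdot)^p$ becomes an integral over $t>0$ of a set functional of $\{f>t\}$. Replacing each level set $\{f>t\}$ by the dilate of $K$ of the same volume produces — the dilates being nested, since $t\mapsto\vol(\{f>t\})$ is nonincreasing — a function $\bar f=\phi\circ g(K,\cdot)$ with $\phi:[0,\infty)\to[0,\infty)$ nonincreasing. This preserves the distribution function of $f$, so $\|\bar f\|_1=\|f\|_1$ and $\|\bar f\|_\l=\|f\|_\l$; and by the bathtub principle — among sets of prescribed volume, sublevel sets of $g(K,\cdot)$ minimize $\int_A g(K,x)^p\,dx$ — one gets $\int\bar f\,g(K,\cdot)^p\le\int f\,g(K,\cdot)^p$. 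Since the right‑hand side of \eqref{ineq_moment} is nondecreasing in this integral while its left‑hand side is unchanged, it suffices to treat $f=\phi\circ g(K,\cdot)$; moreover equality forces every level set of $f$ to be a dilate of $K$, so an extremal $f$ is necessarily of this form.

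Next I would pass to one dimension via $\int_{\R^n}\psi(g(K,x))\,dx=n\vol(K)\int_0^\infty\psi(r)r^{n-1}\,dr$, which turns the three quantities into $n\vol(K)$ times, respectively, $A:=\int_0^\infty\phi\,r^{n-1}dr$, $D:=\int_0^\infty\phi\,r^{n+p-1}dr$ and $C:=\int_0^\infty\phi^\l r^{n-1}dr$. Using $\lp/\l=\lp-1$, the powers of $n\vol(K)$ cancel and \eqref{ineq_moment} becomes the one‑dimensional inequality $A^{\,n+p\lp}\le c_0\,D^{\,n}C^{\,p\lp/\l}$, where $c_0$ denotes the (scale‑invariant) value of the quotient $A^{n+p\lp}D^{-n}C^{-p\lp/\l}$ at $\phi=p_\l$, the required integrals of $p_\l$ being finite precisely because $\l\in\domlambda$. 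To prove it, given $\phi$ I would pick $\tilde\phi(r)=a\,p_\l(br)$ with $a,b>0$ fixed so that $A[\tilde\phi]=A[\phi]$ and $C[\tilde\phi]=C[\phi]$ — possible by the two independent homogeneities of $\tilde\phi$ in $a$ and $b$. The point is that $\tilde\phi^{\l-1}$ is affine in $r^p$: it equals $\beta(r^p-\rho^p)$ on the support $[0,\rho]$ of $\tilde\phi$ with $\beta<0$ when $\l>1$, and $\alpha+\beta r^p$ on all of $[0,\infty)$ with $\alpha,\beta>0$ when $\l<1$. Feeding the pointwise bound $u^\l\ge v^\l+\l v^{\l-1}(u-v)$ — convexity of $t\mapsto t^\l$ for $\l>1$, the reverse inequality for $0<\l<1$ by concavity — with $u=\phi$, $v=\tilde\phi$ into $\int_0^\infty(\cdot)\,r^{n-1}dr$ and using $C[\phi]=C[\tilde\phi]$ yields a definite sign for $\int_0^\infty\tilde\phi^{\l-1}(\phi-\tilde\phi)r^{n-1}dr$; combining this with $\int_0^\infty(\phi-\tilde\phi)r^{n-1}dr=0$ and the single sign change of $r^p-\rho^p$ collapses everything to $D[\phi]\ge D[\tilde\phi]$. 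Since $A$ and $C$ agree for $\phi$ and $\tilde\phi$, and $\tilde\phi$ makes the quotient equal to $c_0$, one concludes $A[\phi]^{n+p\lp}\le c_0\,D[\phi]^nC[\phi]^{p\lp/\l}$, with equality only when $\phi=\tilde\phi$, i.e. $f(x)=a\,p_\l(b\,g(K,x))$. Finally, substituting $s\mapsto s^p$ in the integrals defining $A,D,C$ at $p_\l$ expresses them through the Beta values $B(n/p,\lp)$, $B(n/p+1,\lp)$ and $B(n/p,\lp+1)$, and the recursions $B(m+1,\ell)=\tfrac{m}{m+\ell}B(m,\ell)$, $B(m,\ell+1)=\tfrac{\ell}{m+\ell}B(m,\ell)$ reduce $c_0$ to $\constanta/n^p$, with $\constanta$ the closed form asserted in the lemma. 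The case $\l=+\infty$ I would handle by the same scheme, with $\|f\|_\infty$ replacing $\|f\|_\l$ and $\tilde\phi=\|\phi\|_\infty\,\chi_{[0,\rho]}$.

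The main obstacle is the one‑dimensional comparison, and the key idea is the choice of which two functionals to match: matching $A$ and $C$ — not, say, $A$ and $D$ — is exactly what makes the cross term $\int\tilde\phi^{\l-1}(\phi-\tilde\phi)r^{n-1}dr$ collapse, thanks to $\tilde\phi^{\l-1}$ being affine in $r^p$, and turns the problem into the elementary one‑sign‑change comparison above. The remaining ingredients — the bathtub principle, the polar‑coordinate identity, and the Beta‑function bookkeeping — I expect to be routine.
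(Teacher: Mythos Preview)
There is nothing to compare here: the paper does not prove this lemma but quotes it from \cite[Lemma~4.1]{RENYI} and uses it as a black box in Section~4.

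Your outlined proof is nonetheless sound and self-contained. The gauge-rearrangement via the bathtub principle and the polar-coordinate reduction to the one-dimensional inequality $A^{n+p\lp}\le c_0\,D^nC^{p\lp/\l}$ are routine, and the cancellation of the $\vol(K)$ powers (using $\lp/\l=\lp-1$) is correct, leaving exactly the factor $n^p$ you record. The substantive step is the comparison of $\phi$ with $\tilde\phi=a\,p_\l(b\,\cdot)$, and matching $A$ and $C$ (rather than $A$ and $D$) so that $\tilde\phi^{\l-1}$ becomes affine in $r^p$ is the right idea. Two points deserve an explicit line when you write it out. First, for $\l>1$ the comparison function $\tilde\phi$ is supported on $[0,\rho]$, so the convexity bound only controls $\int_0^\rho\tilde\phi^{\l-1}(\phi-\tilde\phi)r^{n-1}dr$; the tail $\int_\rho^\infty\phi\,r^{n+p-1}dr$ must then be combined with the $A$-matching identity $\int_0^\rho(\phi-\tilde\phi)r^{n-1}dr=-\int_\rho^\infty\phi\,r^{n-1}dr$ to produce the nonnegative remainder $\int_\rho^\infty(r^p-\rho^p)\phi\,r^{n-1}dr\ge 0$, which closes the estimate $D[\phi]\ge D[\tilde\phi]$. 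Your phrase ``single sign change of $r^p-\rho^p$'' points to this, but the split across $r=\rho$ should be displayed. Second, for $\l<1$ the extremizer $p_\l$ does not have compact support, so equality in \eqref{ineq_moment} is not literally attained in the class of compactly supported $f$ assumed in the statement; this is a feature of the lemma as stated (and of the original in \cite{RENYI}), not a defect of your argument.
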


Our main results consist in asymmetric versions of inequalities \eqref{ineq_BSlp_func_sim} and \eqref{ineq_Renyi}, both inequalities obtained directly from \eqref{ineq_moment} and \eqref{ineq_BSlp_asim}.
The major difficulty is to establish a suitable definition of a Santal\'o point for $f$ (see Theorem \ref{thm_existeSantaloLp}).

We shall prove the following two theorems:

\begin{theorem}
	\label{thm_BSlp_func_asim}
	Let $\l \in \domlambda$. We consider two separate cases:

	\begin{enumerate}
		\item Symmetric case: For any nonzero nonnegative continuous function $f:\R^n \to \R$ with compact support,

	\begin{equation} \label{ineq_BSlp_func_asim_12}
		\|f\|_{1}^{n+\l' p}	\leq 2^{-n} \constantb \|f\|_{\l}^{\l'p} \left\{ \int_{\sp^{n-1}} \left(\int_{\R^n} f(x) |\langle x, \xi \rangle|^p dx \right)^{-\frac np} d\xi \right\}^{-p} .
	\end{equation}

	The best constant $\constantb$ is given by
	\[\constantb = \left(\frac{n}{n+p}\right)^{n} \constanta R_{n,p}^{p}n^p.\]

	Moreover, Inequality \eqref{ineq_BSlp_func_asim_12} is sharp and equality holds if, and only if, $f(x) = a p_\l(|B x|)$, where $a \in \R$ and $B \in \gl_n$. Here $\gl_n$ denotes the set of invertible $n \times n$-matrices.

	\item Asymmetric case: For each nonzero nonnegative continuous function $f:\R^n \to \R$ with compact support, there exists a point $\c \in \R^n$ such that for any $\varepsilon \in [0, \frac 12)$,
	\begin{equation} \label{ineq_BSlp_func_asim}
		\|f\|_{1}^{n+\l' p} \leq \constantb \|f\|_{\l}^{\l'p} \left\{ \int_{\sp^{n-1}} \left(\int_{\R^n} f(x - \c) \langle x, \xi \rangle_\varepsilon^p dx \right)^{-\frac np} d\xi \right\}^{-p} .
	\end{equation}

	Moreover, Inequality \eqref{ineq_BSlp_func_asim} is sharp and equality holds if, and only if, $f(x) = a p_\l(|B(x-\c)|)$, where $a \in \R$ and $B \in \gl_n$.

	\end{enumerate}

\end{theorem}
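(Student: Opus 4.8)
The plan is to deduce both parts of Theorem~\ref{thm_BSlp_func_asim} from the moment inequality \eqref{ineq_moment} by choosing the convex body $K$ in that lemma to be (a translate of) the polar of the asymmetric $L_p$ moment body of $f$, and then controlling $\vol(K)$ via the geometric asymmetric Blaschke-Santal\'o inequality \eqref{ineq_BSlp_asim}. Concretely, for a fixed $f$ and $\varepsilon$, set
\[
h(y) = \left( \int_{\R^n} f(x-\c)\, \langle x, y\rangle_\varepsilon^p\, dx \right)^{1/p},
\]
which is a convex, positively $1$-homogeneous function of $y$ (the integrand $\langle\,\cdot\,,y\rangle_\varepsilon^p$ is convex in $y$ for $p\ge 1$), hence the support function of a convex body $L_\varepsilon(f)$; one checks $\int_{\R^n} f(x-\c)\, g(L_\varepsilon(f)^\circ, x)^p\, dx = \int_{\R^n} f(x-\c)\, h(x)^p\, dx$ by the polarity relation $g(L^\circ,x) = h(L,x)$. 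Applying \eqref{ineq_moment} with $K = L_\varepsilon(f)^\circ$ gives
\[
\|f\|_1^{n+p\l'} \le \constanta \left( \int_{\R^n} f(x-\c)\, h(x)^p\, dx\right)^n \|f\|_\l^{p\l'}\, \vol(L_\varepsilon(f)^\circ)^p,
\]
so it remains to bound the inner integral and $\vol(L_\varepsilon(f)^\circ)^p$ in terms of $\int_{\sp^{n-1}} h(\xi)^{-n}\, d\xi$, and to produce the right constant $\constantb$.

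For the volume factor, I would use the relation between $\vol(L_\varepsilon(f)^\circ)$ and $\int_{\sp^{n-1}} h(\xi)^{-n} d\xi$: in polar coordinates, $\vol(L_\varepsilon(f)^\circ) = \frac 1n \int_{\sp^{n-1}} \rho(L_\varepsilon(f)^\circ,\xi)^n\, d\xi = \frac 1n \int_{\sp^{n-1}} h(\xi)^{-n}\, d\xi$ since the radial function of the polar is the reciprocal of the support function of the body. That already identifies the double-integral expression on the right-hand side of \eqref{ineq_BSlp_func_asim} as $n\,\vol(L_\varepsilon(f)^\circ)$, up to the exponent bookkeeping. The inner integral $\int f(x-\c) h(x)^p\, dx$ is where the asymmetric moment body and the geometric inequality \eqref{ineq_BSlp_asim} enter: one recognizes $\int_{\R^n} f(x-\c) h(x)^p\, dx = \int_{\sp^{n-1}} h(\theta)^p \left( \int_0^\infty f(r\theta - \c) r^{n+p-1}\, dr\right) d\theta$, and comparing with $h(\theta)^p = \int f(x-\c)\langle x,\theta\rangle_\varepsilon^p dx$ one can see this integral is a fixed functional of $f$ that, after the substitution reducing to the level-set / layer-cake decomposition, is controlled exactly by applying \eqref{ineq_BSlp_asim} to each level set $\{f \ge t\}$ — except that the Santal\'o point changes with the level set, which is precisely the obstruction the paper flags. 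This is the crux: the point $\c$ must be chosen \emph{once}, independent of $t$, so that $\c$ simultaneously serves (or nearly serves) as a good center for all level sets. I expect this to rely on the $L_p$-center-of-mass construction announced in Section~3 and on Theorem~\ref{thm_existeSantaloLp}; the existence of $\c$ is asserted, not constructed, in the statement, so the proof should cite that theorem for the existence of $\c$ and then carry out the layer-cake estimate with that fixed $\c$.

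For the symmetric case (part 1), the argument is cleaner: one takes $\varepsilon \to 0$ is not needed — instead one works directly with $|\langle x,\xi\rangle|^p = \langle x,\xi\rangle_{1/2}^p \cdot 2$, i.e. the symmetric bracket is (up to the factor $2$) the $\varepsilon = 1/2$ case, which explains the $2^{-n}$ in \eqref{ineq_BSlp_func_asim_12}: replacing $\langle x,\xi\rangle_\varepsilon^p$ by $\tfrac12|\langle x,\xi\rangle|^p$ rescales $h$ by $2^{-1/p}$, hence $\int h(\xi)^{-n} d\xi$ by $2^{n/p}$, and raising to the power $-p$ contributes $2^{-n}$. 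Here no moving Santal\'o point issue arises because the symmetric moment body $\Gamma_p(\text{level set})$ has its Santal\'o point at the origin automatically (by symmetry), so one applies the symmetric geometric inequality \eqref{ineq_BSlp} level-set by level-set without needing a translation, and $\c$ can be taken to be $0$ after centering. The equality characterization in both parts comes from tracking the equality cases: equality in \eqref{ineq_moment} forces $f(x-\c) = a\, p_\l(b\, g(K,x))$ with $K = L_\varepsilon(f)^\circ$, and equality in \eqref{ineq_BSlp_asim}/\eqref{ineq_BSlp} forces the relevant moment body to be an origin-symmetric ellipsoid, which forces $g(K,\cdot)$ to be a Euclidean norm composed with a linear map, i.e. $g(K,x) = |Bx|$ for some $B \in \gl_n$; absorbing $b$ into $B$ yields $f(x) = a\, p_\l(|B(x-\c)|)$ as claimed.

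The main obstacle, as the authors themselves signal, is the definition and existence of the point $\c$ in the asymmetric case so that a single translation works for the entire layer-cake decomposition; everything else is bookkeeping of constants (verifying $\constantb = \left(\frac{n}{n+p}\right)^n \constanta\, R_{n,p}^p\, n^p$ by evaluating both sides on the extremizer $f = p_\l(|x|)$) and standard polar-coordinate identities. I would therefore structure the proof as: (i) reduce to \eqref{ineq_moment} via the convexity of $h$; (ii) rewrite the volume factor using polar coordinates; (iii) invoke Theorem~\ref{thm_existeSantaloLp} to fix $\c$ and apply \eqref{ineq_BSlp_asim} (resp. \eqref{ineq_BSlp} in the symmetric case) to the level sets; (iv) assemble and compute the constant; (v) chase the equality cases through each inequality used.
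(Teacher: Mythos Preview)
Your overall architecture is right --- apply the moment inequality \eqref{ineq_moment} with a well-chosen $K$ and then feed in the geometric asymmetric Blaschke--Santal\'o inequality \eqref{ineq_BSlp_asim} --- but the specific choice of $K$ and the role you assign to layer-cake are off, and this is a genuine gap in the asymmetric case.

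You take $K = L_\varepsilon(f)^\circ = \Med f^{\c}$ in \eqref{ineq_moment}. That leaves you with the ``inner integral'' $\int f(x-\c)\,h(x)^p\,dx = \iint f(x-\c)f(z-\c)\langle z,x\rangle_\varepsilon^p\,dz\,dx$, a bilinear expression in $f$ that you then try to control by layer-cake plus \eqref{ineq_BSlp_asim} on each level set. This does not go through: Theorem~\ref{thm_existeSantaloLp} does \emph{not} produce a point $\c$ that is simultaneously (or even approximately) the Santal\'o point of $\Me(\{f\ge t\}-\c)$ for all $t$. What it produces is a point $\c$ for which $0$ is the Santal\'o point of the single convex body $\Me\Med f^{\c}$. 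So the ``single translation works for the whole layer-cake'' step that you flag as the crux is precisely the step that is \emph{not} supplied by Theorem~\ref{thm_existeSantaloLp}, and I do not see another way to close it.

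The paper sidesteps layer-cake entirely by a different choice of $K$ in \eqref{ineq_moment}: it takes $K=\Med\Med f$ (the polar of the moment body of the polar of the moment body). The point of this double construction is the identity
\[
\int_{\R^n} f(x)\,g(\Med\Med f,x)^p\,dx \;=\; \frac{n}{n+p}\,\vol(\Med f),
\]
obtained by Fubini after writing $g(\Med\Med f,x)^p=h(\Me\Med f,x)^p=\frac{1}{n+p}\int_{\sp^{n-1}} r(\Med f,\xi)^{n+p}\langle x,\xi\rangle_\varepsilon^p\,d\xi$ and recognizing $\int f(x)\langle x,\xi\rangle_\varepsilon^p\,dx=g(\Med f,\xi)^p$. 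Plugging this into \eqref{ineq_moment} gives
\[
\|f\|_1^{n+\l'p}\le \left(\tfrac{n}{n+p}\right)^n \constanta\,\|f\|_\l^{\l'p}\,\vol(\Med f)^n\,\vol(\Med\Med f)^p,
\]
and now \eqref{ineq_BSlp_asim} is applied \emph{once}, to the single convex body $\Med f$, yielding $\vol(\Med\Med f)\le R_{n,p}\,\vol(\Med f)^{-n/p-1}$. This step is exactly where the choice of $\c$ from Theorem~\ref{thm_existeSantaloLp} is used: it guarantees that $0$ is the Santal\'o point of $\Me\Med f$, so no further translation is needed in \eqref{ineq_BSlp_asim}. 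The factors of $\vol(\Med f)$ then collapse to $\vol(\Med f)^{-p}=(n^{-1}\int_{\sp^{n-1}} h(\xi)^{-n}d\xi)^{-p}$, giving $\constantb$ on the nose. No level sets enter anywhere.

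Your treatment of the symmetric case (the $2^{-n}$ from $|\langle\cdot,\cdot\rangle|^p=2\langle\cdot,\cdot\rangle_{1/2}^p$) and of the equality cases is fine; those carry over unchanged once the correct $K$ is used.
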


It is easy to see, using the ideas in the proof of Theorem \ref{thm_existeSantaloLp}, that such a point $\c$ in \eqref{ineq_BSlp_func_asim} must belong to $\co( \supp f)$. While inequalities \eqref{ineq_BSlp_func_asim_12} and \eqref{ineq_BSlp_func_asim} are valid for continuous functions $f$ with compact support, they can be extended to $L^1\left(\R^n, (1 + |x|^p) dx\right) \cap L^\l(\R^n)$.
Taking $\varepsilon = 0$ and $f(x) = p_\l(g(K,x))$ for $K$ a star-body, one recovers
\begin{equation}
	\label{ineq_BSlp_asim_new}
	\vol(K)^{\frac{n}{p}+1} \min_{s\in\R^n} \left\{ \vol(M_{0,p}^\circ (K-s)) \right\} \leq R_{n,p}.
\end{equation}
The same conclusion holds for $\l = + \infty$ and $f = \chi_K$ with $K \subseteq \R^n$ any bounded measurable set.
The reader should compare with Inequality \eqref{ineq_BSlp_asim}, see also Remark \ref{rem_uniqueness}.

It should be emphasized that the content of the second part of Theorem \ref{thm_BSlp_func_asim} reduces to the case $\varepsilon=0$ for which the right-hand side is minimized.

%\todo{Decir algo aqui? o despues del teorema }
\begin{theorem}
	Let $\l \in \domlambda$ and $\varepsilon \in [0, \frac 12]$. Then, for each nonzero nonnegative continuous function $f:\R^n \to \R$ with compact support, there exists a point $\c \in \R^n$ such that
	\label{thm_Renyi_asim}
	\begin{equation}
		\label{ineq_Renyi_asim}
		\|f\|_{1}^{n+\l'p} \|g\|_{1}^{n+\l'p} \leq  \constantd \|f\|_{\l}^{\l'p } \|g\|_{\l}^{\l'p} \left( \int_{\R^n} \int_{\R^n} f(x - \c) g(y) \langle x, y \rangle_\varepsilon^p dx dy \right)^{n}
	\end{equation}
for any nonzero nonnegative continuous function $g:\R^n \to \R$ with compact support.
The best constant $\constantd$ is given by
\[\constantd = \constanta \constantb n^{-p}.\]
Moreover, Inequality \eqref{ineq_Renyi_asim} is sharp and equality holds for some $g$ and $\c$ if, and only if, $f(x) = a p_\l(|B(x-\c)|)$ and $g(x) = a' p_\l(|B^{-T}x|)$, where $a,a' \in \R$ and $B \in \gl_n$.
\end{theorem}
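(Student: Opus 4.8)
The strategy is to derive \eqref{ineq_Renyi_asim} by \emph{chaining} the two ingredients that are already in place: the moment inequality \eqref{ineq_moment}, applied to $g$ with a suitably chosen convex body, and the functional $L_p$ Blaschke--Santal\'o inequality \eqref{ineq_BSlp_func_asim} of Theorem \ref{thm_BSlp_func_asim}, applied to $f$. The object linking the two is the polar of the asymmetric $L_p$ moment body of the recentered function $f(\cdot-\c)$.

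Concretely, I would fix $f$ and $\varepsilon\in[0,\tfrac12)$, let $\c\in\R^n$ be the point furnished by the asymmetric part of Theorem \ref{thm_BSlp_func_asim}, and introduce the convex body $Q$ with support function $h(Q,y)=\bigl(\int_{\R^n}f(x-\c)\langle x,y\rangle_\varepsilon^p\,dx\bigr)^{1/p}$; this is $1$-homogeneous and convex in $y$, being the support function of an asymmetric $L_p$ moment body in the sense of Ludwig and Haberl--Schuster, here attached to the measure with density $f(\cdot-\c)$. Using the polar-volume formula $n\vol(Q^\circ)=\int_{\sp^{n-1}}h(Q,\xi)^{-n}\,d\xi$, inequality \eqref{ineq_BSlp_func_asim} says exactly that $\|f\|_1^{n+\l'p}\le\constantb\,\|f\|_\l^{\l'p}\,(n\vol(Q^\circ))^{-p}$, i.e.
\[
	\vol(Q^\circ)^p\le \constantb\,n^{-p}\,\|f\|_\l^{\l'p}\,\|f\|_1^{-(n+\l'p)}.
\]
I would then apply \eqref{ineq_moment} to the function $g$ with $K=Q^\circ$. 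Its gauge satisfies $g(Q^\circ,y)=\|y\|_{Q^\circ}=h(Q,y)$, and $h(Q,y)^p=\int_{\R^n}f(x-\c)\langle x,y\rangle_\varepsilon^p\,dx$ since $y\mapsto\langle x,y\rangle_\varepsilon^p$ is positively $p$-homogeneous, so that $\int_{\R^n}g(y)\,g(Q^\circ,y)^p\,dy=\int_{\R^n}\int_{\R^n}f(x-\c)g(y)\langle x,y\rangle_\varepsilon^p\,dx\,dy$. Hence \eqref{ineq_moment} reads
\[
	\|g\|_1^{n+\l'p}\le\constanta\left(\int_{\R^n}\int_{\R^n}f(x-\c)g(y)\langle x,y\rangle_\varepsilon^p\,dx\,dy\right)^{n}\|g\|_\l^{\l'p}\,\vol(Q^\circ)^p .
\]
Substituting the bound on $\vol(Q^\circ)^p$ and multiplying through by $\|f\|_1^{n+\l'p}$ produces \eqref{ineq_Renyi_asim} with $\constantd=\constanta\constantb n^{-p}$.

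For the endpoint $\varepsilon=\tfrac12$ I would repeat the argument with $\c=0$, using the symmetric inequality \eqref{ineq_BSlp_func_asim_12} in place of \eqref{ineq_BSlp_func_asim} (legitimate because $\langle x,y\rangle_{1/2}^p=\tfrac12|\langle x,y\rangle|^p$): the extra factor $2^{-n}$ on the right of \eqref{ineq_BSlp_func_asim_12} is cancelled exactly by the $2^{n}$ that appears when $|\langle x,y\rangle|^p$ is rewritten as $2\langle x,y\rangle_{1/2}^p$ inside the $n$-th power, so $\constantd$ is unchanged. For the equality case, equality in \eqref{ineq_Renyi_asim} forces equality simultaneously in \eqref{ineq_BSlp_func_asim} and in \eqref{ineq_moment} applied to $g$ with $K=Q^\circ$; the former gives $f(x)=a\,p_\l(|B(x-\c)|)$, and for such $f$ a linear change of variables together with the rotational invariance of $p_\l(|\cdot|)$ (the symmetry $u\mapsto-u$ absorbing the $\varepsilon$-dependence of $\langle u,\eta\rangle_\varepsilon^p$) shows $h(Q,\cdot)$ is a positive multiple of $|B^{-T}\cdot|$, whence the latter forces $g=a'\,p_\l(b'\,h(Q,\cdot))=a'\,p_\l(|B^{-T}\cdot|)$ up to constants; conversely such pairs saturate both inequalities.

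The step I expect to demand the most care is the well-definedness of $Q$: one needs $Q$, hence $Q^\circ$, to be a nondegenerate convex body with the origin in its interior, so that $Q^\circ$ is admissible in \eqref{ineq_moment} and both sides of \eqref{ineq_Renyi_asim} are finite and strictly positive. Boundedness of $Q$ is automatic because $f$ has compact support, but the origin lies in the interior of $Q$ only when $\int_{\R^n}f(x-\c)\langle x,\xi\rangle_\varepsilon^p\,dx>0$ for \emph{every} $\xi\in\sp^{n-1}$, which for $\varepsilon=0$ fails precisely when the support of $f(\cdot-\c)$ is contained in a closed half-space through the origin. Ruling this out is exactly what the existence statement of Theorem \ref{thm_BSlp_func_asim} --- resting in turn on the construction of the $L_p$ functional Santal\'o point in Theorem \ref{thm_existeSantaloLp} --- guarantees: $\c$ may be chosen so that the support of $f(\cdot-\c)$ surrounds the origin. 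Once that is secured, the remainder is the routine substitution described above.
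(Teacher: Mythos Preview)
Your proposal is correct and follows essentially the same route as the paper's proof: both apply the moment inequality \eqref{ineq_moment} to $g$ with the convex body $K=\Med f^{\c}$ (your $Q^\circ$), rewrite the resulting $p$-th moment as the double integral $\iint f(x-\c)g(y)\langle x,y\rangle_\varepsilon^p\,dx\,dy$, and then eliminate $\vol(\Med f^{\c})^p$ using Theorem~\ref{thm_BSlp_func_asim} for $f$ in the form $\|f\|_1^{n+\l'p}\le\constantb\,\|f\|_\l^{\l'p}(n\vol(\Med f^{\c}))^{-p}$. Your explicit treatment of the endpoint $\varepsilon=\tfrac12$ and of the nondegeneracy of $Q$ are elaborations the paper leaves implicit, but the underlying argument is the same.
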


The paper is organized as follows: In Section 2 we introduce the main tools and definitions. Section 3 is devoted to the existence of the Santal\'o point of $f$ (Theorem \ref{thm_existeSantaloLp}) and in Section 4 we prove Theorems \ref{thm_BSlp_func_asim} and \ref{thm_Renyi_asim}.

\section{Background in Convex Geometry}

This section is devoted to basic definitions and notations in Convex Geometry. For a comprehensive reference we refer to the book \cite{Sch}.

We recall that a convex body $K \subset \R^n$ is a convex compact subset of $\R^n$ with non-empty interior.

The support function $h_K$ is defined as

$$
h_K(y)=\max\{\langle y, z\rangle :\ z\in K\}\, .
$$
It describes the (signed) distance of supporting hyperplanes of $K$ to the origin and uniquely characterizes $K$. We also have the gauge $g$ and radial $r$ functions of $K$ defined respectively as

\[
g(K,y):=\inf\{\lambda>0 :\  y\in \lambda K\}\, ,\quad y\in\R^n\setminus\{0\}\, ,
\]

\[
r(K,y):=\max\{\lambda>0 :\ \lambda y\in K\}\, ,\quad y\in\R^n\setminus\{0\}\, .
\]
Clearly, $g(K,y) = r(K,y)^{-1}$. We also recall that $g(K,\cdot)$ is actually a norm when the convex body $K$ is centrally symmetric, i.e. $K=-K$, and the unit ball with respect to $g(K,\cdot)$ is just $K$.
On the other hand, a general norm on $\R^n$ is uniquely determined by its unit ball, which is a centrally symmetric convex body.

For a convex body $K\subset \R^n$ we define the polar body, denoted by $K^\circ$, by

\[
K^\circ:=\{y\in\R^n :\ \langle y,z \rangle\leq 1\quad \forall z\in K\}\, .
\]
Evidently, $h_K^{-1}(\cdot) = r(K^{\circ}, \cdot)$. It is also easy to see that $(\lambda K)^\circ=\frac{1}{\lambda}K^\circ$ for all $\lambda>0$. A simple computation using polar coordinates shows that

\begin{equation}
	\label{not_polares}
	\vol(K)=\frac{1}{n}\int_{\sp^{n-1}}r^n(K,\xi)d\xi=\frac{1}{n}\int_{\sp^{n-1}} g(K,\xi)^{-n} d\xi\, .
\end{equation}

For a given convex body $K\subset\R^n$ we find in the literature many bodies associated to it. In particular, Lutwak and Zhang introduced in \cite{LZ} for a convex body $K$ its $L_p$ centroid body $\Gamma_pK$. This body is defined by

\[
h_{\Gamma_pK}^p(y):=\frac{1}{c_{n,p}\vol(K)}\int_{K}|\langle y,z\rangle|^p dz\quad \mbox{ for }y\in\R^n\, ,
\]
where

\[
c_{n,p} = \frac{\omega_{n+p}}{\omega_2 \omega_n \omega_{p-1}}\, .
\]

There are some other normalizations of the $L_p$ centroid body in the literature and the previous one is made so that $\Gamma_p \B^n_2 =\B^n_2$ for the Euclidean unit ball $\B^n_2$ in $\R^n$ centered at the origin.

The definition of $\Gamma_pK$ can also be written as
\[
h_{\Gamma_pK}^p(y)=\frac{1}{n c_{n-2,p}\vol(K)}\int_{\sp^{n-1}} r(K,\xi)^{n+p}|\langle y,\xi \rangle|^p d\xi\quad \mbox{ for }y\in\R^n\, .
\]

For $\varepsilon \in [0,1]$ it is also convenient to define the asymmetric moment body of $K$ as
\begin{align*}
	h(\Me K, y)^p
	&= \int_K \langle y, z \rangle_\varepsilon^p dz\\
	&= \frac 1{n+p} \int_{\sp^{n-1}} r(K,\xi)^{n+p} \langle y, \xi \rangle_\varepsilon^p d \xi \quad \mbox{ for }y\in\R^n\, .
\end{align*}

The family of asymmetric centroid inequalities \eqref{ineq_BSlp_asim} was proved by Haberl and Schuster in \cite{HS}.
Notice that $h(\Me K, \cdot)$ as given in the first formula is always a convex function regardless of $K \subset \R^n$ being convex or not.
\begin{definition}
	\label{def_centroidf}
	For a positive continuous function $f:\R^n \to \R$ with compact support we define the convex set $\Me f$ by the support function
	\[
      h(\Me f, y) = \left(\int_{\R^n} f(z) \langle y, z \rangle_\varepsilon^p dz \right)^{\frac 1p}.
    \]
\end{definition}

We recall that $\bold{c}(K)$ represents the center of mass of the set $K$. For any function $f:\R^n \to \R$ and $s \in \R^n$, we also denote $f^s(x) = f(x - s)$. In order to prove Theorems \ref{thm_BSlp_func_asim} and \ref{thm_Renyi_asim} we shall make use of the following key result:
\begin{theorem}
	\label{thm_existeSantaloLp}
	Let $f:\R^n \to \R$ be a nonzero nonnegative continuous function with compact support and $\varepsilon \in \{0, \frac 12, 1\}$. Then, there exists a point $\c \in \R^n$ such that $0$ is the Santal\'o point of $\Me \Med (f^{\c})$. In other words, the point $\c$ satisfies $\bold{c}(\Med \Med (f^{\c})) = 0$.
\end{theorem}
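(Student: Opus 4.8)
The plan is to produce the point $\c$ as a fixed point of a continuous self-map of $\co(\supp f)$, exactly mimicking how the Santal\'o point of a convex body is characterized via the center of mass of its polar (Lemma \ref{lemma_center}). First I would fix $\varepsilon \in \{0,\tfrac12,1\}$ and, for each shift $s \in \R^n$, consider the convex body $K_s := \Med (f^{s})$; its polar dual has $\Me K_s = \Me\Med(f^s)$, the body appearing in the statement. The key observation is that $0$ being the Santal\'o point of $\Me K_s$ is equivalent, by Lemma \ref{lemma_center}, to $\bold{c}\big((\Me K_s)^\circ\big) = \bold{c}\big(\Med\Med(f^s)\big) = 0$. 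So the task reduces to: find $s$ with $\bold{c}\big(\Med\Med(f^s)\big) = 0$.

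To set up a fixed-point argument, I would define $\Phi(s) := s + \bold{c}\big(\Med\Med(f^s)\big)$ (or, if a contraction-type normalization is cleaner, $\Phi(s) := s - \bold{c}(\Med\Med(f^s))$) and look for $\Phi(s) = s$. The essential analytic inputs are: (i) for every $s$, the function $y \mapsto h(\Me\Med(f^s),y)$ is finite, positive away from the origin, and convex — hence $\Me\Med(f^s)$ is a genuine convex body with $0$ in its interior, so its polar $\Med\Med(f^s)$ is again a convex body containing the origin in its interior and its center of mass is well defined; (ii) the map $s \mapsto \Med\Med(f^s)$ is continuous in the Hausdorff metric, which follows from continuity of $f$ and the dominated-convergence/uniform-convergence behavior of the defining integrals as $s$ varies over a compact set, together with continuity of polarity on bodies with the origin uniformly in the interior; and (iii) centers of mass depend continuously on the body in the Hausdorff metric. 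Granting these, $\Phi$ is continuous. Finally I need a compact convex set that $\Phi$ maps into itself: I claim $\Phi$ maps $C:=\co(\supp f)$ into $C$. Indeed $\bold{c}(\Med\Med(f^s))$ should be controlled by $-s$ plus something inside $C$; more carefully, one shows $\bold{c}(\Med\Med(f^s)) + s \in C$ by relating the center of mass of the double-polar body to the "mass distribution" of $f^s$ and using that moving $s$ outside $C$ forces the center of mass to point back toward $C$. Then Brouwer's fixed point theorem yields $\c \in C$ with $\Phi(\c) = \c$, i.e. $\bold{c}(\Med\Med(f^{\c})) = 0$, which is the assertion.

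The main obstacle I expect is step (iii) combined with the invariance $\Phi(C)\subseteq C$: one must rule out that $\bold{c}(\Med\Med(f^s))$ runs off to infinity or behaves wildly, and show the correct "restoring" direction. The clean way is to prove an a priori bound — the polar body $\Med\Med(f^s)$ has center of mass lying in $-s + C'$ for some fixed compact $C'$, and in fact a monotonicity/sign estimate: if $s$ lies far outside $C$ in direction $u$, then $h(\Me\Med(f^s),u)$ is large relative to $h(\Me\Med(f^s),-u)$ (because $f^s$ is then concentrated on the side $\langle x,u\rangle < 0$), which by duality pushes the center of mass of $\Med\Med(f^s)$ to the side $\langle y, u\rangle > 0$ by a definite amount, forcing $\Phi(s)$ back toward $C$. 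Making this sign estimate quantitative enough to close the invariance — uniformly in $\varepsilon \in \{0,\tfrac12,1\}$, and handling the asymmetric cases $\varepsilon \in \{0,1\}$ where $\Me$ is not centrally symmetric — is the technical heart of the argument; everything else is continuity bookkeeping plus Brouwer.
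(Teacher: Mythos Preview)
Your overall strategy---reduce to finding a zero of $s\mapsto \bold{c}(\Med\Med(f^s))$ and appeal to a Brouwer-type topological principle---is exactly the paper's plan. However, two of your analytic inputs fail in the asymmetric cases $\varepsilon\in\{0,1\}$, and the invariance $\Phi(C)\subseteq C$ is neither proved nor, as stated, correct.

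The first and most concrete gap is your assertion that ``for every $s$, the function $y\mapsto h(\Me\Med(f^s),y)$ is finite''. For $\varepsilon=0$, whenever $0\notin\operatorname{int}\co(\supp f^{s})$ there is a direction $y$ with $\langle y,x\rangle\le 0$ for all $x\in\supp f^{s}$, hence $h(\M f^{s},y)=0$ and $\Md(f^{s})$ is \emph{unbounded}: it contains an entire cone. Over such a cone $\int \langle y,z\rangle_+^{p}\,dz=+\infty$ for many $y$, so $h(\M\Md(f^{s}),\cdot)$ is not finite, $\M\Md(f^{s})$ is not a convex body, and the Hausdorff-continuity route you outline breaks down. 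The paper confronts this head on: Lemma~\ref{lemma_mu_cont} truncates $\Md f^{\alpha}$ by balls $\B_R$, proves uniform lower bounds $\delta_R(\alpha,\xi)\ge \varepsilon(T)>0$, and passes to the limit by dominated convergence to get continuity of $\mu(\alpha)=\int_{\Md\Md f^{\alpha}}z\,dz$ without ever assuming $\Md f^{\alpha}$ is bounded.

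The second gap is the invariance $\Phi(C)\subseteq C$ with $C=\co(\supp f)$. You offer only a heuristic (with a sign slip: if $s$ is far in direction $u$ then $\supp f^{s}=s+\supp f$ lies on the side $\langle x,u\rangle>0$, not $<0$), and this heuristic describes behavior \emph{outside} $C$, which is irrelevant to a fixed-point argument run on $C$. The paper bypasses this entirely: instead of a self-map of $\co(\supp f)$, it works on a large ball $\B_T$ (with $T\gg\operatorname{diam}(\supp f)$) and shows, via an explicit cone computation, that for every $\alpha\in\partial \B_T$ one has $C_b(-\alpha)\subseteq \Md f^{\alpha}$, which forces $\Md\Md f^{\alpha}\subseteq\overline{C_0(\alpha)}$ and hence $\langle\mu(\alpha),\alpha\rangle>0$. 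A linear homotopy to the identity then gives $\deg(\mu,\B_T,0)=1$ and a zero of $\mu$. This is precisely the ``restoring direction'' estimate you anticipated, but made quantitative and placed on a sphere where the geometry is clean; your proposal stops short of supplying it.
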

Thus in the proofs of Theorems \ref{thm_BSlp_func_asim} and \ref{thm_Renyi_asim} it will be enough to translate $f$ by $\c$, although one could also minimize the right-hand side with respect to the translates of $f$ as in \eqref{ineq_BSlp_asim_new}.
For $\varepsilon=0$, $f = \chi_K$ and $p \to + \infty$ we have $\Med \Med (f^{s}) \to \co((K-s) \cup \{0\} )$ and Inequality \eqref{ineq_BSlp_asim_new} recovers \eqref{ineq_BSasim} with $s$ replaced by $\bold{c} = \bold{c}(K)$ (not by the Santal\'o point), see Inequality \eqref{ineq BScentr} of the introduction.
Therefore we must consider $\c$ to be an $L_p$-functional analogue of the center of mass.

\begin{remark}
	\label{rem_uniqueness}
	Although we are tempted to view $\c$ as an $L_p$-functional affine-invariant point, we do not know if this point is in fact unique.
	We also do not know about the existence of such a point for $\varepsilon \not\in \{0, \frac 12, 1\}$.
	Another point we leave open is whether there is a unique point minimizing the right-hand side of \eqref{ineq_BSlp_func_asim} or even of \eqref{ineq_BSlp_asim_new} (this point would work as an $L_p$-Santal\'o point of $f$ and $K$, respectively). Finally, it would be interesting to undertand the connection between $\c$ and the functional Santal\'o point introduced in \cite{A-K-M}. These problems seem to be challenging.
\end{remark}

\section{Existence of the point $\c$}
In this section we prove Theorem \ref{thm_existeSantaloLp} which is divided in two steps.
Namely, we prove that the function $\mu$ defined in Lemma \ref{lemma_mu_cont} below is continuous, and later that it has at least one zero.

Let us consider the notations $\M f := M_{0,p} f$, $M_{-,p} f := M_{1,p} f$, $\M K := M_{0,p} K$ and $\langle \cdot, \cdot \rangle_+ := \langle \cdot, \cdot \rangle_0$.
Lemma \ref{lemma_center} motivates the following definition:
\begin{lemma}
	\label{lemma_mu_cont}
Let $f: \R^n \to \R$ be a nonzero nonnegative continuous function with compact support. The function $\mu: \R^n \to \R^n$ given by
\[ \mu(\alpha) = \int_{\Md\Md f^\alpha} z dz\, , \]
is continuous.
\end{lemma}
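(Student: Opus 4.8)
The goal is to show that $\alpha \mapsto \mu(\alpha) = \int_{\Md\Md f^\alpha} z\, dz$ is continuous. The plan is to reduce everything to the continuous dependence of the support function $h(\Me f^\alpha, \cdot)$ on $\alpha$, then push this through the two polarity/moment-body operations, and finally integrate. First I would fix $\alpha_0 \in \R^n$ and a sequence $\alpha_k \to \alpha_0$; since $f$ has compact support, there is a fixed compact set $C$ (a large ball) containing $\supp f^{\alpha}$ for all $\alpha$ in a neighborhood of $\alpha_0$, and $f$ is uniformly continuous, so $f^{\alpha_k} \to f^{\alpha_0}$ uniformly. Then for each fixed $y$,
\[
h(\Me f^{\alpha_k}, y)^p = \int_{\R^n} f^{\alpha_k}(z)\langle y, z\rangle_\varepsilon^p\, dz \longrightarrow \int_{\R^n} f^{\alpha_0}(z)\langle y, z\rangle_\varepsilon^p\, dz = h(\Me f^{\alpha_0}, y)^p,
\]
uniformly for $y \in \sp^{n-1}$ (the integrand is uniformly bounded on $C$ and the measure of $C$ is finite). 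Since these support functions are convex, positively homogeneous, and — crucially — bounded below by a positive constant on $\sp^{n-1}$ uniformly in $k$ (because $f \not\equiv 0$ and continuous, so $\int f^{\alpha}(z)\langle y,z\rangle_\varepsilon^p\, dz$ stays bounded away from $0$; here is where I would use $\varepsilon \in \{0, \tfrac12, 1\}$ only insofar as $\langle y, z\rangle_\varepsilon^p$ is the relevant density — actually a uniform positive lower bound holds for all $\varepsilon \in [0,1]$ as long as $\supp f$ is not contained in a hyperplane through the translate, which one checks), the convex bodies $\Me f^{\alpha_k}$ converge to $\Me f^{\alpha_0}$ in the Hausdorff metric, and they all contain a fixed ball and are contained in a fixed ball.

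Next I would iterate. The polar map $K \mapsto K^\circ$ is continuous in the Hausdorff metric on the class of convex bodies that contain a fixed ball $\rho \B^n_2$ and are contained in a fixed ball $R\B^n_2$ (standard: $r(K^\circ, \xi) = h_K(\xi)^{-1}$, and uniform two-sided bounds on $h_K$ give uniform control). Hence $\Med f^{\alpha_k} \to \Med f^{\alpha_0}$ in the Hausdorff metric, again with uniform inner and outer ball bounds. Now apply the moment-body operation once more: the formula
\[
h(\Me L, y)^p = \frac{1}{n+p}\int_{\sp^{n-1}} r(L,\xi)^{n+p}\langle y, \xi\rangle_\varepsilon^p\, d\xi
\]
shows that $L \mapsto h(\Me L, y)$ is continuous whenever $r(L, \cdot)$ converges uniformly on $\sp^{n-1}$, which follows from Hausdorff convergence together with the uniform ball bounds (radial functions of uniformly bounded, uniformly "fat" convex bodies converge uniformly under Hausdorff convergence). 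So $\Me \Med f^{\alpha_k} \to \Me \Med f^{\alpha_0}$ in the Hausdorff metric, once more with uniform inner/outer ball bounds preserved throughout. Finally, Hausdorff convergence of convex bodies $B_k \to B$ with $\rho\B^n_2 \subseteq B_k \subseteq R\B^n_2$ implies $\chi_{B_k} \to \chi_{B}$ in $L^1$ (the symmetric difference has volume tending to $0$), hence
\[
\mu(\alpha_k) = \int_{B_k} z\, dz = \int_{\R^n} z\, \chi_{B_k}(z)\, dz \longrightarrow \int_{\R^n} z\, \chi_{B}(z)\, dz = \mu(\alpha_0),
\]
the integrand $z\chi_{B_k}(z)$ being dominated by $R\,\chi_{R\B^n_2}(z)$. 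This proves continuity of $\mu$.

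\textbf{Main obstacle.} The routine parts are the two continuity-under-polarity and continuity-under-moment-body steps; the only genuinely delicate point is establishing the \emph{uniform} positive lower bound on $h(\Me f^{\alpha}, \cdot)$ (equivalently, a uniform outer ball bound on $\Med f^{\alpha}$, hence a uniform inner ball bound on $\Med\Med f^{\alpha}$) as $\alpha$ ranges over a neighborhood of $\alpha_0$. This is what prevents the intermediate bodies from degenerating and what makes all the Hausdorff-continuity claims apply. One gets it from the fact that $f$ is continuous and not identically zero: there is a ball $B(x_0, \delta)$ on which $f \geq \eta > 0$, and then $\int f^{\alpha}(z)\langle y,z\rangle_\varepsilon^p\, dz \geq \eta \int_{B(x_0+\alpha,\delta)} \langle y, z\rangle_\varepsilon^p\, dz$, which is continuous and strictly positive in $(\alpha, y)$ on the compact set (neighborhood of $\alpha_0$) $\times\, \sp^{n-1}$, hence bounded below — using here that for $\varepsilon \in \{0,\tfrac12,1\}$ (indeed any $\varepsilon\in(0,1)$, or $\varepsilon\in\{0,1\}$ once one notes $\langle y,z\rangle_\varepsilon^p$ vanishes only on a half-space) the integral of $\langle y,z\rangle_\varepsilon^p$ over a full ball is positive for every $y \neq 0$. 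I would carry this estimate out carefully at the start and then invoke it at each of the three passages above.
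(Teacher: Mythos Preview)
Your proof has a genuine gap precisely at the point you flag as the ``main obstacle.'' The lemma concerns $\Md = M_{+,p}^\circ$, i.e.\ the case $\varepsilon = 0$, and in that case the support function
\[
h(\M f^{\alpha}, y)^p = \int_{\R^n} f^{\alpha}(z)\,\langle y, z\rangle_+^p\, dz
\]
can vanish: it equals zero whenever $\supp f^{\alpha} = \alpha + \supp f$ lies in the closed half-space $\{z:\langle y,z\rangle\le 0\}$, and for every $\alpha$ with $0\notin\operatorname{int}\co(\supp f^\alpha)$ such directions $y$ exist. Your proposed lower bound $\eta\int_{B(x_0+\alpha,\delta)}\langle y,z\rangle_+^p\,dz$ suffers the same fate: as soon as $|x_0+\alpha|>\delta$, the choice $y=-(x_0+\alpha)/|x_0+\alpha|$ places the whole ball $B(x_0+\alpha,\delta)$ in the negative half-space and the integral is zero. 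Hence for generic $\alpha_0$ there is no uniform positive lower bound on $h(\M f^{\alpha},\cdot)$; the body $\M f^\alpha$ fails to contain a ball around the origin, its polar $\Md f^{\alpha}$ is an \emph{unbounded} convex set, and the Hausdorff-metric machinery you invoke (outer ball bound, uniform convergence of radial functions, the moment-body formula in polar coordinates, $L^1$ convergence of indicators) breaks down at every subsequent step.

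The paper's proof confronts exactly this unboundedness. Rather than seeking a lower bound for $h(\M f^{\alpha},\cdot)$, it uses only the easy \emph{upper} bound $h(\M f^\alpha,y)\le A+BT$ for $\alpha\in\B_T$, which yields an inner ball $\B_{(A+BT)^{-1}}\subseteq \Md f^{\alpha}$. It then truncates to $\Md f^{\alpha}\cap \B_R$ and works with
\[
\delta_R(\alpha,\xi)=g\bigl(\Md(\Md f^{\alpha}\cap\B_R),\xi\bigr),\qquad \mu_R(\alpha)=\int_{\sp^{n-1}}\delta_R(\alpha,\xi)^{-n-1}\xi\,d\xi.
\]
The inner ball gives a uniform lower bound $\delta_R\ge\varepsilon(T)>0$ independent of $R$, which both makes each $\mu_R$ continuous and dominates the integrand, so dominated convergence delivers $\mu_R\to\mu$ locally uniformly. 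Your argument would go through for $\varepsilon\in(0,1)$ (there $\langle y,z\rangle_\varepsilon$ vanishes only on a hyperplane and every ball integral is indeed positive), but the lemma is about $\varepsilon=0$, and in that case the truncation-and-limit step is not a technicality but the substance of the proof.
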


\begin{proof}
For $R > 0$, using the radial function $r$ of the set $\Md f^\alpha \cap \B_R$, consider the function $\gamma_R : \R^n \times \R^n \rightarrow \R$ given by

\[
\gamma_R(\alpha, y) = r( \Md f^\alpha \cap \B_R, y ) = \left( \max\{g(\Md f^\alpha, y), R^{-1}\} \right)^{-1}\, ,
\]
where the gauge function $g$ of $\Md f^\alpha$ is defined by

\[
g(\Md f^\alpha, y) := \left( \int_{\R^n} f^\alpha(z) \langle y, z \rangle_+^p dz \right)^{\frac 1p}\, .
\]
Here $\B_R:= \B^n_2(R,0)$ denotes the Euclidean ball of $\R^n$ of radius $R$ centered at the origin.

Notice that $g(\Md f^\alpha, y)$ is continuous on $(\alpha,y)$, so $\gamma_R$ is continuous and positive.

Introduce now the function $\delta_R : \R^n \times \R^n \rightarrow \R$ by

	\begin{align*}
		\delta_R(\alpha, y)
		&= g(\Md(\Md f^\alpha \cap \B_R), y)\\
		&= \left(\int_{\B_R \cap \Md f^\alpha} \langle y, z \rangle_+^p dz \right)^{\frac 1p}\\
		&= \left(\int_{\sp^{n-1}} \gamma_R(\alpha, \xi)^{n+p} \langle y, \xi \rangle_+^p d\xi \right)^{\frac 1p} .\\
	\end{align*}
	Since $\gamma_R(\alpha, \xi)$ varies continuously with respect to $\alpha$ (uniformly on $\xi \in \sp^{n-1}$), we have that $\delta_R$ is continuous.
Moreover, the sequence $\delta_R$ is positive and monotone increasing with respect to $R$.

We shall prove that for every number $T>0$ there is a constant $\varepsilon(T)>0$ such that $\delta_R(\alpha, \xi) \geq \varepsilon(T)$ for every $\alpha \in \B_T$, $\xi \in \sp^{n-1}$ and $R > 1$. To see this, observe that triangular inequality produces
\begin{align*}
h(\M f^\alpha, y) &= \left( \int_{\R^n} f^\alpha(z) \langle y, z \rangle_+^p dz \right)^{\frac 1p} \\
&\leq \|f\|_\infty \max\{\|z\|:\, z \in \supp f^\alpha\} \leq A + B T
\end{align*}
for some positive constants $A$ and $B$ independent of $\alpha$. Here we used that $\supp f^\alpha = \alpha + \supp f$ and $\alpha \in \B_T$.

Notice also that
\[
r(\Md f^\alpha, y) = h(\M f^\alpha, y)^{-1}\, ,
\]
so we have $\B_{ (A + B T)^{-1}} \subseteq \Md f^\alpha$ and

	\begin{equation}
		\label{lemma_mu_cont_bound_delta} \delta_R(\alpha, \xi) \geq \left( \int_{\B_{(A + B T)^{-1}}} \langle \xi, z \rangle_+^p dz \right)^{\frac 1p} =: \varepsilon(T)\, .
	\end{equation}
Remark that the definition of $\varepsilon(T)$ does not depend on $\xi \in \sp^{n-1}$ since the integral of the right-hand side of \eqref{lemma_mu_cont_bound_delta} is invariant by orthogonal transformations.

Let $\mu_R: \R^n \rightarrow \R$ be the function defined by

\[
\mu_R(\alpha) = \int_{\Md(\Md f^\alpha \cap \B_R)} z dz\, .
\]
Using polar coordinates, we get

\begin{align*}
\mu_{R}(\alpha)
&= \int_{\sp^{n-1}} \delta_R(\alpha, \xi)^{-n-1} \xi d \xi
\end{align*}
and so it follows that $\mu_R$ is continuous. In addition, its definition provides readily that $\mu_R \to \mu$ pointwise in $\R^n$ as $R \to +\infty$, and we claim the convergence is uniform in compact sets of $\R^n$, concluding the result. Assuming otherwise, there exist $\alpha_k \to \alpha_0$ and $R_k \to +\infty$ such that $| \mu_{R_k}(\alpha_k) - \mu(\alpha_0) | \geq \varepsilon$ for all $k$.
Since the sequence $(\alpha_k)$ is bounded, in view of \eqref{lemma_mu_cont_bound_delta} we may apply dominated convergence to

\begin{align*}
\mu_{R_k}(\alpha_k)
&= \int_{\sp^{n-1}} \delta_R(\alpha_k, \xi)^{-n-1} \xi d \xi
\end{align*}
and conclude that $\mu_{R_k}(\alpha_k) \to \mu(\alpha_0)$ which is absurd.
\end{proof}

We finish this section by proving Theorem \ref{thm_existeSantaloLp}. For $\varepsilon = \frac 12$ we take the point $\s(f)$ at the origin, since all centroid bodies are symmetric.
Besides, since $M_{-,p} f = -\M f$, we only need to consider the case $\varepsilon = 0$.

\begin{proof}[Proof of Theorem \ref{thm_existeSantaloLp}]
Let $a,b \in (0,1)$ be such that $a^2+b^2 > 1$. Take $D = \max\{ \|z\|:\, z \in \supp f\}$ and fix $T > 0$ large enough so that $T^2 - T D > a T (T+D)$.
We shall prove that

\[
\deg(\mu, \B_T, 0) = 1\, ,
\]
where $\deg$ is the Brouwer topological degree. This proves the lemma by the ``solution'' property of $\deg$.
We shall denote the unit vector $\bar v = \frac v {\|v\|}$ in the direction of $v \in \R^n \setminus \{0\}$.
Fix $\alpha \in \partial \B_T$.
For each $x \in \supp f^\alpha$, we have
\begin{align*}
\langle x, \alpha \rangle
&= T^2 + \langle x-\alpha, \alpha \rangle \\
&\geq T^2 - T D \\
&> a T (T+D)\\
&\geq a \|x\| \|\alpha\|\, ,
\end{align*}
so that $\langle \bar x, \bar \alpha \rangle > a$. Denote $C_t$ the cone defined by

\[
C_t(\alpha) = \{v \in \R^n:\ \langle \bar v, \bar \alpha \rangle > t\}\, .
\]
Choose any $x \in \supp f^{\alpha}$ and $y \in C_{b}(-\alpha)$ and let $z = \bar x - \bar \alpha \langle \bar x, \bar \alpha \rangle$ and $w = \bar y - \bar \alpha \langle \bar y, \bar \alpha \rangle$. Since $z \bot \alpha$ and $w \bot \alpha$, we have
\begin{align*}
\langle \bar x, \bar y \rangle
&= \langle z + \bar \alpha \langle \bar x, \bar \alpha \rangle, w + \bar \alpha \langle \bar y, \bar \alpha \rangle \rangle\\
&\leq \langle \bar x, \bar \alpha \rangle \langle \bar y, \bar \alpha \rangle + \|z\| \|w\|\\
&= \langle \bar x, \bar \alpha \rangle \langle \bar y, \bar \alpha \rangle + \sqrt{1 - \langle \bar x, \bar \alpha \rangle^2} \sqrt{1 - \langle \bar y, \bar \alpha \rangle^2}\\
&< \sqrt{1 - a^2} \sqrt{1 - b^2} - a b \\
&=\sqrt{a^2 b^2 + 1 - a^2 - b^2} - a b\\
& < 0\, ,
\end{align*}
so that $\langle x, y \rangle < 0$ whenever $x \in \supp f^{\alpha}$ and $y \in C_{b}(-\alpha)$. Therefore, for any such a point $y$, we have

\begin{align*}
h(\M f^\alpha, y) &= \left(\int_{\R^n} f^\alpha(z) \langle y, z \rangle_+^p dz \right)^{\frac 1p}\\
&= \left(\int_{\supp f^{\alpha}} f^\alpha(z) \langle y, z \rangle_+^p dz \right)^{\frac 1p} = 0\, .\\
\end{align*}
Then $C_b(-\alpha) \subseteq \Md f^\alpha$. Moreover, if $v \in \R^n$ satisfies $\langle v, \alpha \rangle < 0$, then $-\alpha \in C_b(-\alpha) \cap C_0(v)$ which is thus a non-empty open cone. Consequently,

\[
\int_{\Md f^\alpha} \langle v, z \rangle_+^p dz \geq \int_{C_b(-\alpha)} \langle v, z \rangle_+^p dz \geq \int_{C_b(-\alpha) \cap C_0(v)} \langle v, z \rangle_+^p dz = +\infty\, .
\]
This means that $r(\Md \Md f^\alpha, v) = 0$ for every such $v$ and thus $\Md \Md f^\alpha$ is contained in the closed half space $\overline{C_0(\alpha)}$.
Since $\Md \Md f^\alpha$ has nonempty interior (because it is a convex body) then $\mu(\alpha) \in C_0(\alpha)$.

The relation $\langle \mu(\alpha), \alpha \rangle > 0$ for every $\alpha \in \partial \B_T$ guarantees that the homotopy $\mu_t(\alpha) = (1-t) \mu(\alpha) + t \alpha$ is nonzero for every $t \in [0,1]$ and $\alpha \in \partial \B_T$, and the invariance of the topological degree implies $\deg(\mu, \B_T, 0) = 1$.

\end{proof}

\section{Proof of the main theorems}

\begin{proof}[Proof of Theorem \ref{thm_BSlp_func_asim}]
As mentioned in the introduction, since the expression
\[
	\left( \int_{\sp^{n-1}} \left(\int_{\R^n} f(x - s) \langle x, \xi \rangle_\varepsilon^p dx \right)^{-\frac np} d\xi \right)^{-p}
\]
is minimized when $\varepsilon = 0$ the second part of Theorem \ref{thm_BSlp_func_asim}, which is stated for $\varepsilon \in [0, \frac 12)$, reduces to that case. So, for the complete proof of theorem, it suffices to consider the cases $\varepsilon = 0$ and $\varepsilon = \frac 12$.

For $\varepsilon = 0$ we may invoke Theorem \ref{thm_existeSantaloLp} which guarantees the existence of a point $\s(f) \in \R^n$ such that $0$ is the Santal\'o point of $\Me \Med (f^{\s(f)})$. So, we simply replace $f$ by $f^{\s(f)}$ and assume $0 \in \R^n$ is the Santal\'o point of $\Me \Med f$. For $\varepsilon = \frac 12$ we have $\s(f) = 0$ since it corresponds to the symmetric case and no change is necessary.

Therefore it suffices to show that

\[
	\|f\|_{1}^{n+\l' p} \leq \constantb \|f\|_{\l}^{\l'p} \left( \int_{\sp^{n-1}} \left(\int_{\R^n} f(x) \langle x, \xi \rangle_\varepsilon^p dx \right)^{-\frac np} d\xi \right)^{-p}.
\]

Consider the set $\Med \Med f$ whose gauge function is
\begin{align*}
	g(\Med \Med f, y)
	&= h(\Me \Med f, y)\\
	&= \left( \frac 1{n+p} \int_{\sp^{n-1}} g(\Med f, \xi)^{-n-p} \langle y, \xi \rangle_\varepsilon^p d\xi \right)^{1/p}
\end{align*}
and compute
\begin{align*}
	\int_{\R^n} f(x) g(\Med \Med f, x)^p dx
	&= \frac 1{n+p}\int_{\R^n} f(x)  \int_{\sp^{n-1}} g(\Med f, \xi)^{-n-p} \langle x, \xi \rangle_\varepsilon^p d\xi  dx\\
	&= \frac 1{n+p} \int_{\sp^{n-1}} g(\Med f, \xi)^{-n-p} \int_{\R^n} f(x)  \langle x, \xi \rangle_\varepsilon^p dx d\xi\\
	&= \frac 1{n+p}\int_{\sp^{n-1}} g(\Med f, \xi)^{-n} d\xi\\
	&= \frac n{n+p} \vol(\Med f)\, .
\end{align*}

From Inequality \eqref{ineq_moment} applied to $f$ and $K = \Med \Med f$, we obtain
\begin{align*}
	\|f\|_{1}^{n+\lambda'p}
	&\leq \left( \frac n{n+p}\right)^{n} \constanta \|f\|_{\lambda}^{\lambda'p} \vol(\Med \Med f)^{p} \vol(\Med f)^{n}.
\end{align*}

Since $\Med f$ is a convex body and $0$ is the Santal\'o point of $\Me \Med f$ we may apply Inequality \eqref{ineq_BSlp_asim} with $K = \Med f$ to obtain
\begin{align*}
	\|f\|_{1}^{n+\lambda'p}
	&\leq \left( \frac n{n+p}\right)^{n} \constanta R_{n,p}^{p} \|f\|_{\lambda}^{\lambda'p} \vol(\Med f)^{-(n+p)} \vol(\Med f)^{n} \\
	&= \constantb \|f\|_{\lambda}^{\lambda'p} (n \vol(\Med f))^{-p}
\end{align*}
and conclude
\begin{align*}
	\|f\|_{1}^{n+\l'p}
	&\leq \constantb \|f\|_{\lambda}^{\lambda'p} \left( \int_{\sp^{n-1}} \left(\int_{\R^n} f(x) \langle x, \xi \rangle_\varepsilon^p dx \right)^{-\frac np} d\xi \right)^{-p}.
\end{align*}
\end{proof}

\begin{proof}[Proof of Theorem \ref{thm_Renyi_asim}]
As in the proof of Theorem \ref{thm_BSlp_func_asim} we may assume $\s(f)=0$. Consider the set $\Me f$ and compute
%Take the following function
%\[\tilde H_{f}(v) = h(\Me f, v)\]
%so
\[
\int_{\R^n} g(\Med f,x)^p g(x) dx = \int_{\R^n} \int_{\R^n} f(x) g(y) \langle x, y \rangle_\varepsilon^p dy dx\, .
\]
Then Inequality \eqref{ineq_moment} with $K = \Med f$ gives

\[
\|g\|_{1}^{n+\lambda'p} \leq  \constanta \|g\|_{\lambda}^{\lambda'p} \vol(\Med f)^{p} \left( \int_{\R^n} \int_{\R^n} f(x) g(y) \langle x, y \rangle_\varepsilon^p dy dx \right)^{n}\, .
\]
Combining with Theorem \ref{thm_BSlp_func_asim} for $f$,

\[
\|f\|_{1}^{n+\lambda'p} \leq \constantb \|f\|_{\lambda}^{{\lambda'p}}  (n \vol(\Med f))^{- p}
\]
we get

\begin{align*}
\|f\|_{1}^{n+\lambda'p} \|g\|_{1}^{n+\lambda'p}
	&\leq  d_{n,p,\l} \|f\|_{\lambda}^{{\lambda'p} } \|g\|_{\lambda}^{{\lambda'p}} \left( \int_{\R^n} \int_{\R^n} f(x) g(y) \langle x, y \rangle_\varepsilon^p dy dx \right)^{n}.\\
\end{align*}

\end{proof}

The equality cases of Theorems \ref{thm_BSlp_func_asim} and \ref{thm_Renyi_asim} follow directly from the equality cases of Inequalities \eqref{ineq_BSasim} and \eqref{ineq_moment}. This finishes the proof.\\

\noindent {\bf Acknowledgments:}
The first author was partially supported by Fapemig (APQ-01454-15). The second author was partially supported by CNPq (PQ 305650/2016-5) and the program Incentivo \`a produtividade em ensino e pesquisa from the PUC-Rio. The third author was partially supported by CNPq (PQ 306855/2016-0) and Fapemig (APQ 02574-16).

\end{document}